\theoremstyle{plain}
\newtheorem{thm}{Theorem}
\newtheorem*{thm*}{Theorem}
\newtheorem{lemma}[thm]{Lemma}
\newtheorem*{prop*}{Proposition}
\newtheorem*{question*}{Question}
\theoremstyle{definition}
\newtheorem*{definition*}{Definition}
\newtheorem*{example*}{Example}
\newtheorem{remark}[thm]{Remark}
\newtheorem*{remark*}{Remark}
\renewcommand{\leq}{\leqslant}
\renewcommand{\geq}{\geqslant}
\newcommand{\core}[1]{\operatorname{core}(#1)}
\newcommand{\soc}[1]{\operatorname{socle}(#1)}
\newcommand{\tor}{\operatorname{Tor}}
\newcommand{\hf}[2]{\operatorname{HF}\!\left(#1,\,#2\right)}
\newcommand{\m}{\mathfrak{m}}
\newcommand{\cm}{Cohen-Macaulay}
\renewcommand{\l}{\ensuremath{\underline \ell}}
\newcommand{\f}{\ensuremath{\underline f}}
\newcommand{\fN}{\ensuremath{f_{N+1}}}
\newcommand{\bfN}{\ensuremath{\bar f_{N+1}}}
\newcommand{\dN}{\ensuremath{d_{N+1}}}
\title{\bf Bound on the multiplicity of\\ 
		   almost complete intersections}
\author{Bahman Engheta
		\\
		\\
		\small\it
		Department of Mathematics, University of California, Riverside, CA 92521
		\\
		\small\it
		E-mail address: \rm engheta@math.ucr.edu}
\date{}
\begin{document}

\maketitle 

\begin{abstract}
Let $R$ be a polynomial ring over a field of characteristic zero and let $I\subset R$ be a graded ideal of height $N$ which is minimally generated by $N+1$ homogeneous polynomials. 
If $I=(f_1,\ldots,\fN)$ where $f_i$ has degree $d_i$ and $(f_1,\ldots,f_N)$ has height $N$, then the multiplicity of $R/I$ is bounded above by $\prod_{i=1}^N d_i - \max\left\{ 1,\ \sum_{i=1}^N (d_i-1) - (\dN-1) \right\}$. 
\end{abstract}

\ \\
\it Keywords: 
\rm almost complete intersection, multiplicity, core

\section{Introduction}

Let $R=k[X_1,\ldots,X_n]$ where $k$ is a field of characteristic zero and let $f_1,\ldots,f_N$ be forms in $R$ of degrees $d_1,\ldots,d_N$, respectively, which generate an ideal of height $N$. 
That is, $f_1,\ldots,f_N$ form a regular sequence and they generate a complete intersection ideal with multiplicity $\prod_{i=1}^N d_i$. 
Let $\fN$ be yet another form, of degree $\dN$, which is a zero-divisor on $R/(f_1,\ldots,f_N)$, that is, $(f_1,\ldots,f_N) \subsetneq (f_1,\ldots,f_N):\fN \subsetneq R$. 
Herein the ideal $(f_1,\ldots,\fN)$ is referred to as an \emph{almost complete intersection}. 
Throughout this article, let $I=(f_1,\ldots,\fN)$ and $\f=f_1,\ldots,f_N$. 
We will abuse the notation $\f$ to also denote the ideal generated by this sequence. 

It can be shown, as one would expect, that the multiplicity of $R/I$ is strictly less than that of $R/\f$. 
This note is aimed at making this fact more precise and exhibiting a bound for the multiplicity of $R/I$ in terms of the degrees $d_1,\ldots,\dN$ of the minimal generators of $I$. 
Our approach leads quite naturally to the notion of the core of an ideal, which was first alluded to by Rees and Sally in \cite{rs} and treated more explicitly by Huneke and Swanson in \cite{hs}, and which has been the subject of growing interest in recent years. 

\smallskip%

{\it The core of an ideal.} 
Recall that given two ideals $B\subseteq A$, $B$ is said to be a \emph{reduction} of $A$ if $A^{t+1} = BA^t$ for some non-negative integer $t$, and consequently for all integers greater than $t$. 
(Equivalently, a reduction of $A$ is a subideal of $A$ with the same integral closure as $A$.) 
The least such integer is called the \emph{reduction number} of $A$ with respect to $B$. 
A reduction $B$ of $A$ is called \emph{minimal} if no ideal properly contained in $B$ is a reduction of $A$, that is, if it is minimal with respect to inclusion. 
As minimal reductions are not unique, one is led to consider the intersection of all minimal reductions: the \emph{core} of an ideal $A$, denoted $\core A$, is defined as the intersection of all reductions (equivalently, minimal reductions) of $A$. 

\smallskip%

We first take advantage of the well-understood structure of $R/\f$ and express its multiplicity $\prod_{i=1}^N d_i$ as the length of $R/(\f,\l)$, where $\l=\ell_1,\ldots,\ell_{n-N}$ is a sequence of $n-N$ general linear forms. 
Our focus will then be to determine when, and by how much, the additional generator $\fN$ further reduces the length of $R/(\f,\l)$ to that of $R/(I,\l)$. 
Finally, as the multiplicity of $R/I$ is no greater than the length of $R/(I,\l)$, we obtain 
\begin{equation} \label{ineq}
	e(R/I) 
	\ \leq \ 
	\lambda( R/{(I,\l)} ) 
	\ \lneq \ 
	\lambda( R/{(\f,\l)} ) 
	\ = \ 
	e(R/\f), 
\end{equation}
where $\lambda(~)$ and $e(~)$ denote length and multiplicity, respectively.
More concretely, we prove the following 
\begin{thm} \label{main}
Let $R$ be a polynomial ring over a field of characteristic 0. 
If $I \subset R$ is an almost complete intersection minimally generated by $f_1,\ldots,\fN$ such that $f_1,\ldots,f_N$ form a regular sequence, then the multiplicity of $R/I$ is at most 
\[
	\prod_{i=1}^N d_i - \max\left\{ 1,\ \sum_{i=1}^N (d_i-1) - (\dN-1) \right\}, 
\]
where $d_i = \deg(f_i)$ for $i=1\ldots N$.
\end{thm}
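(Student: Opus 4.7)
The plan is to exploit the inequality (\ref{ineq}) from the introduction and split into two cases based on whether $\dN$ exceeds $s := \sum_{i=1}^N(d_i - 1)$, the socle degree of the artinian reduction. Fix a generic sequence $\l = \ell_1,\ldots,\ell_{n-N}$ of linear forms so that $A := R/(\f,\l)$ is an artinian graded complete intersection with Hilbert series $\prod_i(1 + t + \cdots + t^{d_i - 1})$, length $\prod_{i=1}^N d_i = e(R/\f)$, and socle degree $s$. Writing $\bfN$ for the image of $\fN$ in $A$, the short exact sequence
\[
0 \to A/(0 :_A \bfN) \xrightarrow{\bfN} A \to A/(\bfN) \to 0
\]
yields $\lambda(R/(I,\l)) = \lambda(A) - \lambda(\bfN A)$, so by (\ref{ineq}) the theorem reduces to
\[
\lambda(\bfN A) \ \geq \ \max\{1,\ s - \dN + 1\}.
\]

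When $\dN > s$, $A_{\dN} = 0$, so $\bfN = 0$ and the length bound is vacuous. I would instead work directly with
\[
0 \to R/((\f):\fN) \xrightarrow{\fN} R/\f \to R/I \to 0.
\]
Since $\f$ is unmixed (being a complete intersection) and $\fN$ is a zero-divisor on $R/\f$, $\fN$ lies in some minimal prime of $\f$, and a short primary-decomposition computation shows $\codim{(\f):\fN} = N$. All three modules then have dimension $n - N$, so additivity of multiplicity in top dimension gives $e(R/I) = \prod d_i - e(R/((\f):\fN)) \leq \prod d_i - 1$, matching $\max\{1, s - \dN + 1\} = 1$ in this regime.

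When $\dN \leq s$, I would first show $\bfN \neq 0$ in $A$ for generic $\l$: since $\fN \notin \f$ by minimal generation and $A_{\dN}$ is nonzero under the hypothesis $\dN \leq s$, a standard genericity argument—essentially noting that the locus forcing $\bfN = 0$ in the flat open stratum cannot be the whole stratum, as its closure in the parameter space would contain $\l = 0$ and thus demand $\fN \in \f$—yields $\bfN \neq 0$ for generic $\l$. Next, because $A$ is Gorenstein with simple socle $A_s \cong k$ and perfect pairings $A_a \times A_{s - a} \to A_s$, and because $A$ is generated in degree $1$ so that the product $A_t \cdot A_{s - \dN - t} = A_{s - \dN}$ is surjective for every $0 \leq t \leq s - \dN$, the pairing supplies some $g \in A_{s - \dN}$ with $\bfN g$ spanning the socle; writing $g = \sum_i u_i v_i$ with $u_i \in A_t$ and $v_i \in A_{s - \dN - t}$ forces some $\bfN u_i \neq 0$, so $(\bfN A)_{t + \dN} \neq 0$. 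Summing over $t = 0, 1, \ldots, s - \dN$ gives $\lambda(\bfN A) \geq s - \dN + 1$, as needed.

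The main obstacle is the non-vanishing step in the second case: securing $\bfN \neq 0$ in $A$ via a genericity argument, where the condition $\dN \leq s$ enters precisely to ensure that the degree-$\dN$ component of the Hilbert function survives the artinian reduction. The remainder—combining Gorenstein self-duality with degree-$1$ generation of $A$—is a clean consequence of the complete intersection structure.
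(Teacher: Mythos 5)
Your overall architecture is sound and, apart from one crucial step, runs parallel to the paper's: the reduction to $\lambda(\bfN A)\geq\max\{1,\,s-\dN+1\}$ with $A=R/(\f,\l)$, the additivity-of-multiplicity argument when $\dN>s$ (this is exactly the ``elementary means'' the paper delegates to \cite[Lemma 8]{e}), and the Gorenstein-duality count $\lambda(\bfN A)\geq s-\dN+1$ once $\bfN\neq0$ in $A$ (the paper phrases the same point as the Hilbert-function inequality $\hf{R/(I,\l)}{i}\leq\hf{R/(\f,\l)}{i}-1$ for $\dN\leq i\leq r$, with $r=s$). Those parts are correct.

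However, the step you yourself flag as the main obstacle --- that $\bfN\neq0$ in $A$ for generic $\l$ whenever $\dN\leq s$ --- is a genuine gap, and the specialization argument you sketch for it is invalid. The condition $\fN\in(\f,\l)_{\dN}$ is membership in the subspace $\f_{\dN}+\sum_i\ell_iR_{\dN-1}$, which \emph{shrinks} discontinuously as $\l$ specializes; it is a closed condition only within the stratum where $\dim(\f,\l)_{\dN}$ is constant, and it does not pass to the closure of that stratum. So the inference ``the bad locus is the whole open stratum, hence its closure contains $\l=0$, hence $\fN\in\f$'' is false as a matter of logic. Concretely, take $\f=(X_1^2)\subset k[X_1,X_2]$ and $\fN=X_1X_2^5$: then $\fN\in(\f,\ell)$ for \emph{every} linear form $\ell$ making $(\f,\ell)$ artinian, yet $\fN\notin\f$; your argument, applied verbatim, would conclude the opposite. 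You may object that in this example $A_{\dN}=0$, but your specialization argument makes no quantitative use of the hypothesis $\dN\leq s$ beyond noting $A_{\dN}\neq0$, and that alone cannot suffice: what is actually needed is the precise statement that the intersection of the ideals $(\f,\l)/\f$ over all general $\l$ equals $\bar\m^{s+1}$, i.e., the identity $\core{\bar\m}=\bar\m^{s+1}$ combined with the Corso--Polini--Ulrich theorem that the core is the intersection of the \emph{general} minimal reductions. Establishing this is the entire technical content of the paper (the Eagon--Northcott/Koszul socle computation of Lemma~\ref{core-m}, the colon formula \eqref{colon-formula} of Huneke--Trung and Polini--Ulrich, and \cite[Theorem 4.5]{cpu1}); your proposal in effect assumes it. To repair the proof you would need either to import these results or to supply an independent argument that $\fN\notin(\f,\l)$ for some choice of general $\l$ when $\dN\leq s$.
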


Let $\bar R:=R/\f$ and let $\bar~$ denote the residue class in $\bar R$. 
Our argument is based on the well-known fact (see \cite{nr}) that in $\bar R$ any choice of $n-N$ general linear forms $\l$ generates a graded minimal reduction of the homogeneous maximal ideal $\bar\m=(\bar X_1,\ldots,\bar X_n)$, and all graded minimal reductions of $\bar\m$ are of this form. 

Note that the strict inequality in \eqref{ineq} holds if and only if $\fN \notin (\f,\l)$. 
Thus, to establish this inequality it suffices to show that the image of $\fN$ in $\bar R$ is not contained in some graded minimal reduction of $\bar\m$. 
To this end, we appeal to a result of Corso, Polini, and Ulrich \cite[Theorem 4.5]{cpu1} which implies that in our setting $\core{\bar\m}$ is in fact the intersection of the graded minimal reductions of $\bar\m$, and we show that $\bfN \notin \core{\bar\m}$. 
To compute $\core{\bar\m}$ we avail ourselves of a formula that was conjectured by Corso, Polini, and Ulrich in \cite{cpu2}, and was proved independently by Huneke and Trung \cite[Theorem 3.7]{ht} and Polini and Ulrich \cite[Theorem 4.5]{pu}.

\subsection{Preliminaries} \label{prelims}

We recall that given a sequence $\l=\ell_1,\ldots,\ell_{n-N}$ of general linear forms, the multiplicity of $R/I$ is bounded above by the length of $R/(I,\l)$. 
Indeed, as $I$ has height $N$, the elements $\l$ constitute a system of parameters of $R/I$. 
The multiplicity of $R/I$ can be obtained as the Euler characteristic of $R/I$ with respect to $\l$
\[ \begin{split}
	\chi( \l,R/I )	\ & = \ \sum_{i\geq0} (-1)^i \, \lambda\big( H_i(\l,R/I) \big) \\
					\ & = \ \sum_{i\geq0} (-1)^{i} \, \lambda\big( \tor_i^R(R/\l,R/I) \big), 
\end{split} \]
where $H_{\bullet}(\l,R/I)$ denotes the Koszul homology of $\l$ with coefficients in $R/I$. 
If we further consider the first partial Euler characteristic 
\[
	\chi_{_1}( \l,R/I ) \ = \ \sum_{i \geq 1} (-1)^{i-1} \, \lambda\big( H_{i}(\l,R/I) \big), 
\]
then we have $\chi( \l,R/I ) = \lambda(R/(I,\l)) - \chi_{_1}( \l,R/I )$ and the following non-negativity result yields $e(R/I) \leq \lambda(R/(I,\l))$, as desired. 
\begin{thm*}[Serre \cite{s}]
The first partial Euler characteristic $\chi_{_1}( \l,R/I )$ is non-negative, or equivalently, $\chi( \l,R/I ) \leq \lambda(R/(I,\l))$.
\end{thm*}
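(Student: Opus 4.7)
The plan is to prove Serre's non-negativity $\chi_{_1}(\l, R/I) \geq 0$ by the classical induction on the length $m := n-N$ of the parameter sequence $\l = \ell_1, \ldots, \ell_m$ for $R/I$. The equivalent inequality $\chi(\l, R/I) \leq \lambda(R/(I,\l))$ is then immediate from the tautological identity $\chi(\l, R/I) + \chi_{_1}(\l, R/I) = \lambda\bigl(H_0(\l, R/I)\bigr) = \lambda(R/(I,\l))$.

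The base case $m = 1$ is direct: for a single element $\ell_1$, the Koszul complex is $0 \to R/I \xrightarrow{\ell_1} R/I \to 0$, with $H_0 = R/(I,\ell_1)$ and $H_1 = (I :_R \ell_1)/I$, so $\chi_{_1}(\ell_1, R/I) = \lambda\bigl((I:\ell_1)/I\bigr) \geq 0$.

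For the inductive step, write $\l = \l' \cup \{t\}$ with $\l' = \ell_1, \ldots, \ell_{m-1}$ and $t = \ell_m$, and view the Koszul complex $K_\bullet(\l; R/I)$ as the total complex of $K_\bullet(\l'; R/I) \otimes_R K_\bullet(t; R)$. The associated first-quadrant spectral sequence has $E_2$-page $E_2^{p,q} = H_p\bigl(\l';\, H_q(t; R/I)\bigr)$ converging to $H_{p+q}(\l; R/I)$, with only the rows $q=0$ and $q=1$ non-zero: $H_0(t; R/I) = R/(I,t)$ and $H_1(t; R/I) = 0:_{R/I} t$. Preservation of the Euler characteristic through the $d_2$-differential gives
\[
    \chi(\l, R/I) \ = \ \chi\bigl(\l',\, R/(I,t)\bigr) \ - \ \chi\bigl(\l',\, 0:_{R/I} t\bigr).
\]
Combining this with $\chi_{_1}(\l, R/I) = \lambda(R/(I,\l)) - \chi(\l, R/I)$ at level $m$ and the analogous identity $\chi_{_1}(\l', R/(I,t)) = \lambda\bigl((R/(I,t))/\l'\cdot R/(I,t)\bigr) - \chi(\l', R/(I,t))$ at level $m-1$, and noting $\lambda(R/(I,\l)) = \lambda\bigl((R/(I,t))/\l'\cdot R/(I,t)\bigr)$, yields the clean recursion
\[
    \chi_{_1}(\l, R/I) \ = \ \chi_{_1}\bigl(\l',\, R/(I,t)\bigr) \ + \ \chi\bigl(\l',\, 0:_{R/I} t\bigr).
\]
For $t = \ell_m$ a general linear form, $\dim R/(I,t) = m-1$ so $\l'$ is a system of parameters for $R/(I,t)$, and the first summand is non-negative by the inductive hypothesis. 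The submodule $0:_{R/I} t$ is finitely generated of dimension at most $m-1$, so $\l'$ is a parameter sequence for it and the second summand is a classical Samuel multiplicity, hence non-negative. Therefore $\chi_{_1}(\l, R/I) \geq 0$, completing the induction.

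The main obstacle is the sign and index bookkeeping in the spectral-sequence computation leading to the displayed recursion, together with the verification that the $d_2$-differential genuinely preserves the Euler characteristic; once that identity is in hand, Serre's inequality follows from the induction without further difficulty, and one recovers both the stated non-negativity of $\chi_{_1}$ and the bound $\chi(\l, R/I) \leq \lambda(R/(I,\l))$ used in Section~\ref{prelims}.
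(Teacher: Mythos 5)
The paper gives no proof of this statement at all --- it is quoted from Serre's \emph{Local Algebra} and used as a black box --- so your argument is necessarily a different route; in fact it is essentially the classical one, close to Serre's own treatment. Your recursion is correct: the spectral sequence of $K_\bullet(\l';R)\otimes K_\bullet(t;R)\otimes M$ with $M=R/I$ does give $\chi(\l,M)=\chi(\l',M/tM)-\chi(\l',0:_M t)$ (all terms have finite length, being finitely generated and killed by $(I,\l)$), and combining this with the tautological identities at levels $m$ and $m-1$ yields $\chi_{_1}(\l,M)=\chi_{_1}(\l',M/tM)+\chi(\l',0:_M t)$ as you claim. The one substantial input you import is the final inequality $\chi(\l',0:_M t)\geq 0$: this is the Auslander--Buchsbaum--Serre multiplicity theorem, namely $\chi(\underline x,N)=e(\underline x,N)$ when $\dim N$ equals the number of elements and $\chi(\underline x,N)=0$ when it is smaller, itself a nontrivial result proved via the associated-graded filtration. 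Citing it is legitimate and not circular (its proof does not use $\chi_{_1}\geq 0$, and Serre orders the two results the same way), but you should state it precisely: when $\dim(0:_M t)<m-1$ the sequence $\l'$ is not a system of parameters for $0:_M t$, and what you need is the \emph{vanishing} half of the theorem rather than positivity of a Samuel multiplicity. With that citation made explicit the induction closes and the proof is correct; what it buys over the paper's bare reference is a transparent reduction of the non-negativity of $\chi_{_1}$ to the standard $\chi=e$ theorem, at the cost of still resting on one deep classical ingredient.
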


\medskip%

{\it Socle degree of complete intersections.} 
Recall that in our setting $\f$ is a regular sequence $f_1,\ldots,f_N$  with $\deg(f_i)=d_i$ and $\l=\ell_1,\ldots,\ell_{n-N}$ is a sequence of general linear forms. 
Next we point out that $R/(\f,\l)$ has a pure socle generated in degree $\sum_{i=1}^N (d_i-1)$. 
To see this, we compute $\tor_n^R(R/(\f,\l), k)$ twice. 
On the one hand, resolving $k$ via the Koszul complex on $X_1,\ldots,X_n$ and tensoring with $R/(\f,\l)$ yields 
\[
	\tor_n^R\!\left(R/(\f,\l), k\right) \ \cong \ \tfrac{(\f,\l):\m}{(\f,\l)}(-n).
\]
On the other hand, resolving $R/(\f,\l)$ via the Koszul complex on $\f,\l$ and tensoring with $R/\m$ we have  
\[
	\tor_n^R\!\left(R/(\f,\l), k\right) \ \cong \ k\big( \!-\!( n-N+\sum_{i=1}^N d_i ) \big).
\]
Hence $\soc{R/(\f,\l)} \cong k\big( \!-\!( \sum_{i=1}^N d_i-N ) \big)$, as claimed. 
Also note that $\lambda(R/(\f,\l)) = e(R/\f) = \prod_{i=1}^N d_i$. 

\begin{remark} \label{red-num}
For brevity set $r := \sum_{i=1}^N (d_i-1)$. 
It follows that $\m^{r+1} \subseteq (\f,\l)$, or equivalently, $\m^{r+1} \equiv \l \, \m^r$ modulo $\f$. 
Thus, in $\bar R=R/\f$ the minimal reduction $\l$ of $\m$ has reduction number $\leq r$. 
We also note that $\bar \m \subset \bar R$ is an \emph{equimultiple} ideal, that is, its height equals its analytic spread $n-N$. 
\end{remark}

\section{Computation of the core}

The following theorem provides a formula for the core of an equimultiple ideal in a \cm{} local ring.

\begin{thm}
[Huneke-Trung {\cite[3.7]{ht}}, Polini-Ulrich {\cite[4.5]{pu}}] \label{core-formula}
Let $S$ be a \cm{} local ring with residue field of characteristic $0$. 
Let $A$ be an equimultiple ideal of $S$ and let $B\subseteq A$ be a minimal reduction of $A$ with reduction number $r$. 
Then $\core{A} = B^{r+1}:A^r$. 
\end{thm}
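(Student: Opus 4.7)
The plan is to prove $\core{A} = B^{r+1}:A^r$ by way of a deformation argument that yields two complementary facts: (a) the colon $B^{r+1}:A^r$ is independent of the choice of minimal reduction $B$ with reduction number $r$, and (b) for every $x \in \core{A}$ one has $xA^r \subseteq B^{r+1}$. These combine with the easy observation $B^{r+1}:A^r \subseteq B^{r+1}:B^r = B$ (valid because $B^r \subseteq A^r$ and because a regular sequence $b_1,\ldots,b_\ell$ satisfies $(b_1,\ldots,b_\ell)^{r+1} : (b_1,\ldots,b_\ell)^r = (b_1,\ldots,b_\ell)$) to close the argument: by (a), $B^{r+1}:A^r = (B')^{r+1}:A^r \subseteq B'$ for every minimal reduction $B'$, giving $B^{r+1}:A^r \subseteq \core{A}$, while (b) supplies the reverse inclusion.

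First I would replace $S$ by $S(t) := S[t]_{\m S[t]}$ to assume an infinite residue field. This faithfully flat local extension preserves the \cm{} property, equimultiplicity, analytic spread, and reduction number, and commutes with $\core{-}$ and with the formation of colons, so no generality is lost. Under this reduction, equimultiplicity combined with the \cm{} hypothesis forces every minimal reduction of $A$ to be generated by a regular sequence of length $\ell = \codim{A}$; this is the structural input that justifies the regular-sequence identity invoked above and keeps the subsequent manipulations of colons and powers tractable.

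I would then set up the deformation by parametrizing minimal reductions via a dense open subscheme $\Omega$ in the affine space of $\ell \times n$ matrices $U = (u_{ij})$: for $U \in \Omega$, the generators $b_i^U := \sum_j u_{ij} a_j$, where $a_1,\ldots,a_n$ generate $A$, define a minimal reduction $B_U$ with reduction number $r$. For (b), observe that $x \in B_U$ for every $U \in \Omega$, so $x = \sum_i c_i(U)\, b_i^U$ with coefficients polynomial in the entries of $U$; differentiating this identity a suitable number of times in $U$ and specializing to the matrix that recovers $B$ produces the claim $xA^r \subseteq B^{r+1}$. A parallel argument yields (a): the colon $B_U^{r+1}:A^r$ is locally constant on $\Omega$ because any generator of the colon transports along an infinitesimal change of $U$ via the same differentiation machinery. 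Characteristic zero is essential in both uses, since iterated differentiation produces factorial denominators that must be invertible in $S$.

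The main obstacle is this deformation-and-specialization step. The delicate points are verifying that $\Omega$ is nonempty with $r$ realized as the generic reduction number, showing that the coefficients $c_i(U)$ may genuinely be chosen polynomial in $U$ (rather than only locally), and confirming that the correct number of differentiation rounds deposits the output precisely in $B^{r+1}$ rather than a larger or smaller power. A subtler collateral difficulty is the semicontinuity of the reduction number across $\Omega$, which is what licenses moving freely within the family while keeping the invariant $r$ fixed.
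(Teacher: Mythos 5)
This statement is not proved in the paper at all: it is quoted verbatim from Huneke--Trung and Polini--Ulrich as an external input, so there is no ``paper's own proof'' to compare against. Judged on its own terms, your outline does capture the architecture of the published characteristic-zero proofs -- parametrizing minimal reductions by an open set of matrices, exploiting that $\core{A}$ lies in every $B_U$, and using a differentiation trick (this is precisely where characteristic zero enters in \cite{ht} and \cite{cpu1}). But as written it is a plan with the two load-bearing walls missing, and you have essentially named them yourself. Step (b), deducing $xA^r \subseteq B^{r+1}$ from $x \in B_U$ for all $U$ in a dense open set, is not a routine differentiation: the coefficients $c_i(U)$ have no a priori reason to be polynomial (or even regular) functions of $U$ on all of $\Omega$, and the standard remedy -- passing to the generic extension $S[U]_{\m S[U]}$, writing $x$ in terms of generic generators there, and then specializing -- together with the induction that controls exactly which power of $B$ the derivatives land in, constitutes the bulk of \cite[Theorem 3.3ff.]{ht}. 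Step (a), the independence of $B^{r+1}:A^r$ (more precisely of $B^{t+1}:A^t$ for $t \gg 0$, since different minimal reductions need not share the same reduction number) from the choice of $B$, is not a ``parallel argument''; it is essentially the content of \cite[Theorem 4.5]{pu} and rests on the Cohen--Macaulayness of $S$ and the regular-sequence structure of $B$ (via residual intersections or a delicate induction on the reduction number), not on transporting colon generators infinitesimally. The closing logic -- $B^{r+1}:A^r \subseteq B'$ for every minimal reduction $B'$, hence the colon sits inside $\core{A}$, with the reverse inclusion from (b) -- is sound once (a) and (b) are in hand, and the auxiliary identity $B^{r+1}:B^r = B$ for $B$ generated by a regular sequence is correct. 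So the proposal is a faithful roadmap of the known proof strategy rather than a proof; the obstacles you flag at the end are genuine and are exactly where the published arguments do their real work.
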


Note that if $A$ is an equimultiple ideal, then its minimal reduction $B$ is generated by a regular sequence and it is easily seen that \mbox{$B^{r+1} : A^r$} $=$ \mbox{$B^{t+1} : A^t$} for all $t \geq r$. 
In \cite[Proposition 2.1]{puv} it is further shown that forming the core of zero-dimensional ideals commutes with localization. 
Thus, the colon formula of Theorem~\ref{core-formula} may be applied in our setting (see also \cite[Theorem 2.3]{puv}) and it yields 
\begin{equation} \label{colon-formula}
	\core{\bar\m} = \frac{(\l^{r+1},\,\f) \,:\, \m^r}{\f}, 
\end{equation}
where $r = \sum_{i=1}^N (d_i-1)$ as set previously. 
Using \eqref{colon-formula} and minimal free resolutions, we now describe $\core{\bar\m}$ more precisely with the following 
\begin{lemma} \label{core-m}
Let $R$ be a polynomial ring over a field of characteristic zero and let $\f \subset R$ be an ideal generated by a regular sequence of $N$ forms of degrees $d_1,\ldots,d_N$. 
Then $\core{\bar\m} = \bar\m^{r+1}$ with $r = \sum_{i=1}^N (d_i-1)$, where $\bar~$ denotes the residue class in $R/\f$. 
\end{lemma}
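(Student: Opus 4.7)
The plan is to apply the colon formula \eqref{colon-formula} and establish the ideal equality
\[
\bar\l^{r+1} :_{\bar R} \bar\m^r \;=\; \bar\m^{r+1}
\]
in $\bar R$. Since $\bar\l^{r+1} :_{\bar R} \bar\m^r$ is the right-hand side of the colon formula (after dividing out $\f$), this will give the lemma.

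\emph{Inclusion $\supseteq$.} As noted in Section~\ref{prelims}, the Artinian Gorenstein quotient $R/(\f,\l)$ has pure socle in degree $r = \sum_{i=1}^N(d_i-1)$, so $\m^{r+1} \subseteq (\f,\l)$. Passing to $\bar R$ and matching degrees forces $\bar\m^{r+1} = \bar\l\,\bar\m^r$, and a straightforward iteration gives $\bar\m^{r+k} = \bar\l^k \bar\m^r$ for every $k \geq 1$. Taking $k=r+1$ yields $\bar\m^{r+1}\cdot\bar\m^r = \bar\m^{2r+1} = \bar\l^{r+1}\bar\m^r \subseteq \bar\l^{r+1}$, proving the claim.

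\emph{Inclusion $\subseteq$.} Let $\bar g \in \bar R_e$ be homogeneous with $\bar g\,\bar\m^r \subseteq \bar\l^{r+1}$. The case $e \geq r+1$ is immediate, so I would assume $e \leq r$ and show $\bar g = 0$ in $\bar R$. Reducing modulo $\bar\l$ in the Artinian Gorenstein ring $S := R/(\f,\l)$ of socle degree $r$, the image $\tilde g$ of $\bar g$ annihilates $\m^r S$; Gorenstein duality identifies this annihilator as $\m S$, immediately handling $e = 0$. For $1 \leq e \leq r$ I would instead work inside the Artinian ring $T := \bar R/\bar\l^{r+1}$, where $\tilde g$ lies in $(0 :_T \bar\m^r T)$, and prove $(0 :_T \bar\m^r T) = \bar\m^{r+1} T$. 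Since $\bar\l$ is a regular sequence in the Gorenstein ring $\bar R$, the associated graded object $\bigoplus_k \bar\l^k/\bar\l^{k+1}$ is a polynomial algebra over $S$ whose Hilbert series is explicit, so the graded components of $T$ can be read off the minimal free resolution of $T$ built from the Koszul syzygies of $\bar\l$. A Hilbert-function comparison between $\bar\l^{r+1} :_{\bar R} \bar\m^r$ and $\bar\m^{r+1}$ in each degree $e \leq r$ then yields the desired equality.

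\emph{Main obstacle.} The sole difficulty is the reverse inclusion in degrees $1 \leq e \leq r$. The quick reduction modulo $\bar\l$ is too lossy---it only controls $\bar g$ modulo $\bar\l$ and therefore only disposes of $e = 0$---so the finer constraint hidden in $\bar g\,\bar\m^r \subseteq \bar\l^{r+1}$ must be extracted inside $T = \bar R/\bar\l^{r+1}$. The catch is that $T$ is Cohen--Macaulay but typically not Gorenstein, so no direct Matlis-duality shortcut is available; one is forced to exploit the explicit free-resolution structure afforded by $\bar\l$ being a regular sequence in the Gorenstein ring $\bar R$.
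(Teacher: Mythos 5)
Your easy inclusion $\bar\m^{r+1}\subseteq\bar\l^{r+1}:\bar\m^r$ is correct and is exactly the paper's one-line argument ($\m^{2r+1}\equiv\l^{r+1}\m^r$ mod $\f$). The problem is the reverse inclusion in degrees $1\leq e\leq r$, which you correctly isolate as the main obstacle but do not actually prove. Your plan --- read off the graded components of $T=\bar R/\bar\l^{r+1}$ from its free resolution and then do ``a Hilbert-function comparison'' between $\bar\l^{r+1}:\bar\m^r$ and $\bar\m^{r+1}$ --- does not close the gap as stated: the Hilbert function of $T$ does not determine the submodule $0:_T\bar\m^r$, and the Hilbert function of the colon ideal is precisely the unknown quantity. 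Knowing $\dim_k T_e$ for each $e$ gives no upper bound on $\dim_k(0:_T\bar\m^r)_e$ without some further structural input, and since (as you note) $T$ is not Gorenstein, there is no free duality to convert one into the other.

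The missing ingredient is the \emph{socle degree} of $T$, not its Hilbert function. The paper shows that $R/(\l^{r+1},\f)$ has socle concentrated in the single degree $2r$: since $\f$ remains a regular sequence modulo $\l^{r+1}$, the tensor product of the Eagon--Northcott resolution of $R/\l^{r+1}$ with the Koszul complex on $\f$ is a (minimal) free resolution of length $n$ whose last module is $R^{b_{n-N}}(-(n+2r))$, and computing $\tor_n^R(R/(\l^{r+1},\f),k)$ from this resolution and from the Koszul resolution of $k$ identifies $\soc{R/(\l^{r+1},\f)}\cong k^{b_{n-N}}(-2r)$. Once you have this, the colon computation is a degree count: if $\deg x=e\leq r$ and $x\m^r\subseteq(\l^{r+1},\f)$, then $x\m^{r-1}$ lands in the socle in degrees $\leq 2r-1$, hence vanishes in $T$, and descending degree by degree forces $x\in(\l^{r+1},\f)_e=\f_e$, i.e.\ $\bar x=0$. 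Your instinct to exploit the resolution coming from $\bar\l$ being a regular sequence is the right one, and a canonical-module version of your duality idea (computing $\omega_T$ as the dual of that resolution and noting it is generated in a single degree) would also work; but the argument as written stops one step short of the fact that actually does the work.
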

\begin{proof}
Say $R=k[X_1,\ldots,X_n]$ and $\f$ is generated by the regular sequence $f_1,\ldots,f_N$ with $\deg(f_i)=d_i$. 
Set $\bar R:=R/\f$ and let $\l=\ell_1,\ldots,\ell_{n-N}$ be a sequence of general linear forms in $R$. 
By Remark~\ref{red-num}, the image of $\l$ in $\bar R$ generates a minimal reduction of $\bar\m$ with reduction number $\leq r$ and by \eqref{colon-formula}, \mbox{$\core{\bar\m} = \frac{(\l^{r+1},\,\f) \,:\, \m^r}{\f}$}. 
To compute this colon, we first resolve $R/(\l^{r+1},\f)$ to determine its socle degree. 
Let 
\[
	\phi := 
	\left( \begin{matrix}\\ \\ \\ \\ \\ \end{matrix} \right.
	\underbrace{ \begin{array}{rrrrrrr}
		\ell_1	&\cdots	&\ell_{n-N}	&		&		&		& \\
			&\ell_1		&\cdots	&\ell_{n-N}	&		&		& \\
			&		&\ddots	&		&\ddots	&		& \\
			&		&		&\ell_1		&\cdots	&\ell_{n-N}	& \\
			&		&		&		&\ell_1		&\cdots	&\ell_{n-N}
	\end{array} }_{n-N+r \mathrm{~columns}}
	\left. \begin{matrix}\\ \\ \\ \\ \\ \end{matrix} \right)
	\begin{matrix}
		\left. \begin{matrix}\\ \\ \\ \\ \\ \end{matrix} \right\} 
		r+1 \mathrm{~rows}
	\end{matrix}
\] 
and note that the $(r+1)\times(r+1)$ minors of $\phi$ generate the ideal $\l^{r+1}$. 
Recall that $R/\l^{r+1}$ is perfect of grade $n-N$ and is minimally resolved by the Eagon-Northcott complex -- see \cite{en}: 
\begin{multline*}
	\mathbb{EN}(\phi): \quad 0
	\to R^{b_{n-N}} (-(n-N+r)) \xrightarrow{d_{n-N}} 
	\cdots
	\\
	\cdots \to R^{b_2} (-(r+2))
	\xrightarrow{d_2} R^{b_1} (-(r+1))
	\xrightarrow{\wedge^{^{\!r+1}}\phi} R
	\to R/\l^{r+1}
	\to 0.
\end{multline*}
On the other hand, $R/\f$ is minimally resolved by the Koszul complex: 
\[
	\mathbb{K}(\f) : \quad 0
	\rightarrow R(-\sum_{i=1}^N d_i)
	\rightarrow \cdots
	\rightarrow \bigoplus_{i=1}^N R(-d_i)
	\rightarrow R
	\rightarrow R/\f
	\rightarrow 0.
\]
We consider the tensor product of the above complexes 
\begin{multline*}
	\mathbb{EN}(\phi) \otimes \mathbb{K}(\f): \quad 0
	\to R^{b_{n-N}} ( -(n-N+r+\sum_{i=1}^N d_i) )
	\to \cdots
	\\
	\cdots \to
	R^{b_1} (-(r+1)) \oplus \bigoplus_{i=1}^N R(-d_i)
	\to R
	\to R/(\l^{r+1},\f)
	\to 0,
\end{multline*}
and recall that its $i$-th homology is isomorphic to $\tor_i^R(R/\l^{r+1},R/\f)$ for $i \geq 0$ -- see \cite[Theorem 11.21]{rot}. 
As $\l$ is generated by a regular sequence and $\f$ is a regular sequence modulo $\underline \ell$, it is also a regular sequence modulo $\l^{r+1}$ and all higher $\operatorname{Tor}_i^{}$ vanish. 
Thus, $\mathbb{EN}(\phi) \otimes \mathbb{K}(\f)$ is in fact a free resolution of $R/(\l^{r+1},\f)$ of length $n$ in which the $n$-th module has a twist of $-(n+r+\sum_{i=1}^N(d_i-1)) = -(n+2r)$. 
It now follows from 
\[
	\tor_n^R(R/(\l^{r+1},\f),k) ~\cong~ k^{b_{n-N}}(-(n+2r)) ~\cong~ \frac{(\l^{r+1},\f) : \m}{(\l^{r+1},\f)}(-n).
\]
that $R/(\l^{r+1},\f)$ has socle isomorphic to $k^{b_{n-N}}(-2r)$. 

To prove our claim, let $x \in R$ such that $\bar x \in \core{\bar\m}$. 
As $\core{\bar\m} = \frac{(\l^{r+1},\,\f) \,:\, \m^r}{\f}$, we have $x\,\m^r \subseteq (\l^{r+1},\,\f)$ and consequently $x\,\m^{r-1}$ is contained in the socle of $R/(\l^{r+1},\f)$. 
This socle is generated in degree $2r$, as shown above. 
Thus, $\deg x \geq r+1$ and $\core{\bar\m} \subseteq \bar\m^{r+1}$. 
The reverse inclusion is clear, as $\m^{2r+1} \subseteq (\l^{r+1},\f)$. 
\end{proof}

\medskip%

We are now ready to prove Theorem~\ref{main}. 

\begin{proof}[Proof of Theorem~\ref{main}]
As before, let $\bar~$ denote the residue class in $\bar R=R/\f$ and let $\m$ be the homogeneous maximal ideal $(X_1,\ldots,X_n)$. 
By Lemma~\ref{core-m}, $\core{\bar\m}$ is generated in degree $r+1$, where $r = \sum_{i=1}^N (d_i-1)$. 
So if $\deg(\fN) = \dN \leq r$, then $\bfN \notin \core{\bar\m}$, that is, $\bfN$ is not contained in some minimal reduction of $\bar\m$. 

By \cite[Theorem 4.5]{cpu1}, $\core{\bar\m}$ is in fact the intersection of \emph{general} minimal reductions of $\bar\m$. 
As the generators of $\bar\m$ are all in degree one, its general minimal reductions are precisely the ideals generated by $n-N$ general linear forms. 
Thus, we have established that $\fN \notin (\f,\l)$ for some choice of general linear forms $\l=\ell_1,\ldots,\ell_{n-N}$ whenever $\dN \leq r$. 
A priori, this implies 
\[
	\hf{R/(I,\l)}{\dN} = \hf{R/(\f,\l)}{\dN} - 1, 
\]
where $\hf{M}~$ denotes the Hilbert function of an $R$-module $M$. 
But as $R/(\f,\l)$ is Gorenstein with a 1-dimensional socle in degree $r$, we have in fact 
\[
	\hf{R/(I,\l)}{i} \leq \hf{R/(\f,\l)}{i} - 1 \quad \mbox{for} \quad \dN \leq i \leq r
\]
and consequently 
\begin{align*}
	\lambda(R/(I,\l)) & \; \leq \; \lambda(R/(\f,\l)) - (r-\dN+1).
\end{align*}
As shown in Section~\ref{prelims}, $e(R/I) \leq \lambda(R/(I,\l))$ and we arrive at 
\begin{align*}
	e(R/I) & \ \leq \ \prod_{i=1}^N d_i - \sum_{i=1}^N (d_i-1) + (\dN-1)
\end{align*}
whenever $\dN \leq r$. 

Finally, notice that the condition $\dN \leq r$ is equivalent to $\sum_{i=1}^N (d_i-1) - (\dN-1) \geq 1$. 
If this condition is not satisfied, one may infer by elementary means (see \cite[Lemma 8]{e}) that $e(R/I) \leq e(R/\f) - 1$. 
This proves the inequality 
\[
	e(R/I) \ \leq \ \prod_{i=1}^N d_i - \max\left\{ 1,\ \sum_{i=1}^N (d_i-1) - (\dN-1) \right\}.
\]
\end{proof}

\section*{Acknowledgments}

I thank Craig Huneke, Claudia Polini, and Bernd Ulrich for valuable conversations.

\thebibliography{CPU2}


\bibitem[CPU1]{cpu1}
A.~Corso, C.~Polini, and B.~Ulrich, {\it The structure of the core of ideals.}
Math.\ Ann.\ {\bf 321} (2001), no.\ 1, 89--105.

\bibitem[CPU2]{cpu2}
A.~Corso, C.~Polini, and B.~Ulrich, {\it Core and residual intersections of ideals.}
Trans.\ Amer.\ Math.\ Soc.\ {\bf 354} (2002), 2579--2594.

\bibitem[EN]{en}
J.A.~Eagon and D.G.~Northcott, {\it Ideals defined by matrices and a certain complex associated with them.}
Proc.\ Roy.\ Soc.\ Ser.\ A {\bf 269} (1962), 188--204.

\bibitem[E]{e}
B.~Engheta, {\it On the projective dimension and the unmixed part of three cubics.}
J.\ Algebra (2007), doi:10.1016/j.jalgebra.2006.11.018

\bibitem[HS]{hs}
C.~Huneke and I.~Swanson, {\it Cores of ideals in 2-dimensional regular local rings.}
Michigan Math.\ J.\ {\bf 42} (1995), no.\ 1, 193--208.

\bibitem[HT]{ht}
C.~Huneke and N.V.~Trung, {\it On the core of ideals.}
Compos.\ Math.\ {\bf 141} (2005), no.\ 1, 1--18.

\bibitem[NR]{nr}
D.G.~Northcott and D.~Rees, {\it Reductions of ideals in local rings.}
Proc.\ Cambridge Philos.\ Soc.\ {\bf 50} (1954), 145--158.

\bibitem[PU]{pu}
C.~Polini and B.~Ulrich, {\it A formula for the core of an ideal.}
Math.\ Ann.\ {\bf 331} (2005), no.\ 3, 487--503.

\bibitem[PUV]{puv}
C.~Polini, B.~Ulrich, and M.~Vitulli, {\it The core of zero-dimensional monomial ideals.}
Adv.\ Math.\ {\bf 211} (2007), no.\ 1, 72--93.

\bibitem[RS]{rs}
D.~Rees and J.D.~Sally, {\it General elements and joint reductions.}
Michigan Math.\ J.\ {\bf 35} (1988), no.\ 2, 241--254.

\bibitem[R]{rot}
J.J.~Rotman, {\it An introduction to homological algebra.}
Pure and Applied Mathematics {\bf 85}. Academic Press, 1979.

\bibitem[S]{s}
J.-P.~Serre, {\it Local algebra.}
Springer Monogr.\ Math.\ Springer-Verlag, 2000.

\end{document}